\newcommand{\Spin}{\mathrm{Spin}}
\newcommand{\Ind}{\mathrm{Ind}}
\newtheorem{theorem}{Theorem}[section]
\newtheorem{lemma}[theorem]{Lemma}
\newtheorem*{thmNN}{Theorem}
\theoremstyle{definition}
\theoremstyle{remark}
\title{Relative systoles in hyperelliptic translation surfaces}
\author{Corentin Boissy, Slavyana Geninska}
\address{Institut de Math\'ematiques de Toulouse \\ 
Universit\'e Toulouse 3 \\
118 route de Narbonne \\
31062 Toulouse, France
}
\email{geninska@math.univ-toulouse.fr}
\email{corentin.boissy@math.univ-toulouse.fr}
\subjclass[2010]{Primary: 32G15. Secondary: 30F30}
\keywords{Translation surfaces, systoles, moduli spaces}
\begin{document}
\begin{abstract}In this paper we prove that the systole fonction on a connected component of area one translation surfaces admits a local maximum that is not a global maximum if and only if the connected component is not hyperelliptic.

\end{abstract}

\maketitle
\section{Introduction}
This paper deals with flat metric defined by Abelian differentials on compact Riemann surfaces (\emph{translation surfaces}). 
For a translation surface, we define the  \textit{relative systole} $\mathrm{Sys}(S)$ to be the length of the shortest saddle connection of $S$. 
A sequence of area one translation surfaces $(S_n)_{n\in \mathbb{N}}$ in a stratum of the moduli space of translation surfaces leaves any compact set if and only if $\mathrm{Sys}(S_n)\to 0$.
The set of translation surfaces with short relative systole and compactification issues of strata are related to dynamics and counting problems on translation surfaces and have been widely studied in the last 30 years (see for instance \cite{KMS,EMZ,EKZ}).

Here, we are interested in the opposite problem: we study surfaces that are ``far'' from the boundary.  In \cite{BG}, we have characterized global maxima for $Sys$ and we have shown that each stratum of genus greater than or equal to 3 contains local but non global maxima for the function $\mathrm{Sys}$. The constructed surfaces in \cite{BG} are not in the hyperelliptic connected components. 

In this paper, we prove that there are no such local maxima in hyperelliptic connected components (Theorem~\ref{th:cc:hyp} in the text), while they exist in every other connected component (Theorem~\ref{th:cc:spin} in the text). This gives us the following characterization.

\begin{thmNN}[Main Theorem]
Let $\mathcal{C}$ be a connected component of a stratum of area one surfaces. The relative systole fonction on $\mathcal{C}$ admits a local maximum that is not a global maximum if and only if $\mathcal{C}$ is not hyperelliptic.
\end{thmNN}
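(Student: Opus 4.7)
The plan is to split the biconditional into two implications, each proved as a separate theorem and then combined. The ``only if'' direction, asserting that no local non-global maxima exist in hyperelliptic components, is the content of Theorem~\ref{th:cc:hyp}; the ``if'' direction, asserting that such local maxima exist in every non-hyperelliptic component, is Theorem~\ref{th:cc:spin}. The Main Theorem then follows immediately by assembling the two.

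For the hyperelliptic direction, I would exploit the fact that every surface in a hyperelliptic connected component carries a canonical involution $\tau$ with quotient a sphere with marked points. At a local maximum $S$ of $\mathrm{Sys}$, the set of saddle connections realising the systole must be rich enough to block every infinitesimal deformation from strictly increasing the shortest length. I would first show that these shortest saddle connections may be taken $\tau$-invariant as a set, and then use a dimension count in period coordinates compatible with $\tau$ to argue that their configuration is so constrained that $S$ must coincide with the explicit global maximiser already identified in \cite{BG}. The conclusion would then be reached by an explicit comparison of systole values.

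For the non-hyperelliptic direction, I would revisit the constructions of \cite{BG}, which already produce local non-global maxima in every stratum of genus at least three, and determine which non-hyperelliptic connected components they cover. For the remaining ones, I would try to modify the construction by cutting and regluing or by performing a controlled local surgery in order to prescribe the spin parity while preserving the property of being a local but non-global maximum of $\mathrm{Sys}$. Since the classification of connected components of strata in the relevant genera is by hyperellipticity together with spin parity, this amounts to producing an explicit witness in each case and computing its invariants.

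The step I expect to be the most delicate is the hyperelliptic direction: ruling out any hidden local maximum other than the known global one requires a genuinely global argument, and the heart of the difficulty is translating the local-maximum condition into enough rigidity on the short saddle connection configuration to force equality with the global maximiser of \cite{BG}. The non-hyperelliptic direction should be considerably more combinatorial, reducing to a careful enumeration of components and a verification that the \cite{BG} construction can be tuned to realise each spin parity.
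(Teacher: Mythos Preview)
Your overall decomposition into Theorem~\ref{th:cc:hyp} and Theorem~\ref{th:cc:spin} is exactly how the paper proceeds, and your plan for the non-hyperelliptic direction is essentially the paper's: start from the local maxima of \cite{BG}, note (using Theorem~\ref{th:cc:hyp}) that they already land in non-hyperelliptic components, and then for strata with two non-hyperelliptic components perform a cut-and-glue surgery with a global maximiser $M$ along carefully chosen shortest saddle connections so as to prescribe the spin parity. The paper makes this precise via a small lemma (Lemma~\ref{lemme:sc:indice}) controlling the index of the gluing curve and an explicit spin formula, and finishes a handful of low-genus cases by hand; your sketch anticipates exactly this shape of argument.

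Where your proposal diverges from the paper, and where I think there is a genuine gap, is the hyperelliptic direction. You propose to show that at a local maximum the systolic saddle connections are $\tau$-invariant and then use a \emph{dimension count in period coordinates} to force the surface to coincide with the global maximiser of \cite{BG}. But a dimension count only tells you that local maxima form a discrete (or low-dimensional) set; it does not by itself single out the global maximiser, and nothing in your outline supplies the extra rigidity. The paper does \emph{not} argue this way. Instead, it normalises $\mathrm{Sys}(S)=1$, cuts $S$ along all length-one saddle connections, and shows that if some complementary piece $C_1$ is not an equilateral triangle then one can \emph{explicitly} decrease the area of $S$ while keeping all length-one saddle connections at length one. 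The hyperelliptic involution is used, via Lemma~\ref{lemme:hyp}, to show each such piece (or a $\tau$-symmetric half of it) is a topological disk, so that the problem reduces to deforming a planar $n$-gon. The heart of the argument is then the purely Euclidean Lemma~\ref{lem:decrease:area:disk}: any $n$-gon disk with $n\geq 4$ and all boundary edges of length $\geq 1$ admits a continuous area-decreasing deformation keeping the length-one edges fixed. This lemma, proved by an elementary but careful analysis of a quadrilateral via Bretschneider's formula, is the missing idea in your plan; without it (or an equivalent explicit deformation), the ``dimension count'' strategy does not close.
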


Note that our notion of relative systole is different from the ``true systole''   \emph{i.e.} shortest closed curve that has been studied by Judge and Parlier  in \cite{JP}. In the rest of the paper, for simplicity, if not mentioned otherwise, the term ``systole'' will mean ``relative systole''.

\section{Background}
\subsection{Translation surfaces}
A \emph{translation surface} is a (real, compact, connected) genus $g$ surface $S $ with a translation atlas \emph{i.e.} a triple $(S,\mathcal U,\Sigma)$ such that $\Sigma$ (whose elements are called {\em singularities}) is a finite subset of $S$  and $\mathcal U = \{(U_i,z_i)\}$ is an atlas of $S \setminus \Sigma$ whose transition maps are translations of $\mathbb{C}\simeq \mathbb{R}^2$. We will require that  for each $s\in \Sigma$, there is a neighborhood of $s$ isometric to a Euclidean cone whose total angle is a multiple of $2\pi$.  One can show that the holomorphic structure on $S\setminus \Sigma$ extends to $S$ and that the holomorphic 1-form $\omega=dz_i$ extends to a holomorphic $1-$form on $S$ where  $\Sigma$ corresponds to the zeroes of $\omega$ and maybe some marked points. We usually call $\omega$ an \emph{Abelian differential}.  A zero of $\omega$ of order $k$ corresponds to a singularity of angle $(k+1)2\pi$. By a slight abuse of notation, we authorize the order of a zero to be 0, in this case it corresponds to a regular marked point.

A \emph{saddle connection} is a geodesic segment joining two singularities (possibly the same) and with no singularity in its interior. Integrating $\omega$ along the saddle connection we get a complex number. Considered as a planar vector, this complex number represents the affine holonomy vector of the saddle connection. In particular, its Euclidean length is the modulus of its holonomy vector. 

For $g \geq 1$, we define the moduli space of Abelian differentials $\mathcal{H}_g$ as the moduli space of pairs $(X,\omega)$ where $X$ is a genus $g$ (compact, connected) Riemann surface and $\omega$ non-zero holomorphic $1-$form defined on $X$. The term moduli space means that we identify the points $(X,\omega)$ and $(X',\omega')$ if there exists an analytic isomorphism $f:X \rightarrow X'$ such that 
$f^*\omega'=\omega$.

One can also see a translation surface obtained from a polygon (or a finite union of polygons) whose sides come by pairs, and for each pair, the corresponding segments are parallel and of the same length. These parallel sides are glued together by translation and we assume that this identification preserves the natural orientation of the polygons. In this context, two translation surfaces are identified in the moduli space of Abelian differentials if and only if the corresponding polygons can be obtained from each other by cutting and gluing and preserving the identifications. 

The moduli space of Abelian differentials is stratified by the  combinatorics of the zeroes; we will denote by $\mathcal{H}(k_1,\ldots ,k_r)$ the stratum of $\mathcal{H}_g$ consisting of (classes of) pairs $(X,\omega)$ such that $\omega$ has exactly $r$ zeroes, of order $k_1,\dots ,k_r$ respectively. It is well known that  this space is  (Hausdorff) complex analytic. We have the classical Gauss--Bonnet formula $\sum_i k_i=2g-2$, where $g$ is the genus of the underlying surfaces. We often restrict to the subset $\mathcal{H}_1(k_1,\dots ,k_r)$ of \emph{area one} surfaces.
 Local coordinates for a stratum of Abelian differentials are obtained by integrating the holomorphic 1--form along a basis of the relative homology $H_1(S,\Sigma;\mathbb{Z})$, where $\Sigma$ denotes the set of conical singularities of $S$. 

\subsection{Connected component of strata}\label{background:cc}
Here, we recall the Kontsevich--Zorich classification the connected components of the strata of Abelian differentials \cite{KoZo}.

A translation surface $(X,\omega)$ is \emph{hyperelliptic} if the underlying Riemann surface is hyperelliptic, \emph{i.e.} there is an involution $\tau$ such that $X/\tau$ is the Riemann sphere. In this case $\omega$ satisfies $\tau^*\omega=-\omega$. A connected component of a stratum is said to be \emph{hyperelliptic} if it consists only of hyperelliptic translation surfaces (note that a connected component which is not hyperelliptic might contain some hyperelliptic translation surfaces).

Let $\gamma$ be a simple closed smooth curve parametrized by the arc length on a translation surface that avoids the singularities. Then $t\to \gamma'(t)$ defines a map from $\mathbb{S}^1$ to $\mathbb{S}^1$. We denote by $Ind(\gamma)$ the index of this map. Assume that the translation surface $S$ has only even degree singularities $S\in \mathcal{H}(2k_1,\dots ,2k_r)$. Let $(a_i,b_i)_{i\in \{1,\dots ,g\}}$ be a collection of simple closed curves as above and representing a symplectic basis of the homology of $S$. Then
$$\sum_{i=1}^g (ind(a_i)+1)(ind(b_i)+1) \mod 2$$
is an invariant of connected component ans is called the \emph{parity of the spin structure} (see \cite{KoZo} for details).

Here is a reformulation of the classification of connected component of strata by Kontsevich--Zorich (Theorem~1 and Theorem~2 of \cite{KoZo}).
\begin{theorem}[Kontsevich--Zorich]
Let $\mathcal{H}=\mathcal{H}(k_1,\dots ,k_r)$ be a stratum of genus $g\geq 2$ translation surfaces. 
\begin{itemize}
\item The stratum $\mathcal{H}$ contains a hyperelliptic connected component if and only if $\mathcal{H}=\mathcal{H}(2g-2)$ or $\mathcal{H}=\mathcal{H}(g-1,g-1)$. In this case there is only one hyperelliptic component. In genus two, any stratum is connected (and hyperelliptic).
\item If there exists $i$ such that $k_i$ is odd, or if $g=3$,  then there exists a unique nonhyperelliptic connected component.
\item If $g\geq 4$ and, for all $i$, $k_i$ is even, then there are exactly two nonhyperelliptic connected components distinguished by the parity of the spin structure.
\end{itemize}
\end{theorem}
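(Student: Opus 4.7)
The plan is to prove the three bullets of the Kontsevich--Zorich classification separately. The overall strategy has three ingredients: explicitly identifying the hyperelliptic components, verifying that the spin parity is a well-defined locally constant invariant, and (the technical core) proving the claimed connectedness of each candidate component via surgery arguments on translation surfaces.

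For the first bullet I would construct the hyperelliptic loci in $\mathcal{H}(2g-2)$ and $\mathcal{H}(g-1,g-1)$ by hand, realising them as double covers of $\mathbb{CP}^1$ branched over $2g+1$ or $2g+2$ points, with either a single Weierstrass zero of order $2g-2$ or a pair of zeros exchanged by the hyperelliptic involution. A dimension count then shows that the hyperelliptic locus is open in each of these two strata (hence a union of components, since the hyperellipticity condition is also closed), and an explicit model (for example a regular or centrally symmetric polygon) shows that it is connected. For any other stratum of genus $\geq 2$, the same dimension count shows that the hyperelliptic surfaces form a proper analytic subvariety and so cannot fill an entire component.

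For the second and third bullets I would first verify that the formula
\[
\sum_{i=1}^g (\Ind(a_i)+1)(\Ind(b_i)+1) \bmod 2
\]
is independent of the choice of symplectic basis: interpreting $\Ind(\gamma)+1$ as the value of a quadratic refinement of the $\mathbb{F}_2$-intersection form, this is Arf's invariant. Local constancy on the stratum follows because the index of a smooth simple loop is a homotopy invariant and a symplectic basis can be transported continuously in a local chart. This produces a partition of the all-even strata into at most two pieces, each a union of components.

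The main obstacle is proving the converse, namely that any two surfaces in a given stratum sharing the same invariants (both nonhyperelliptic when relevant, same spin parity when defined) actually lie in the same component. The standard strategy uses two basic surgeries. First, \emph{breaking up a zero} yields, for each decomposition $k=k_1+k_2$, a natural map $\mathcal{H}(k,\dots)\to \mathcal{H}(k_1,k_2,\dots)$ whose fibres are connected, reducing to strata with fewer zeros. Second, \emph{bubbling a handle} produces explicit paths between strata of different genera, with a controlled effect on the spin parity. One thus reduces to the minimal strata $\mathcal{H}(2g-2)$ and inducts on the genus, treating the base cases $g=2,3$ by direct analysis via Rauzy--Veech diagrams or explicit polygonal models. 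The heart of the work is tracking both hyperellipticity and spin parity under these surgeries, and showing that they supply enough paths to join any two surfaces with matching invariants.
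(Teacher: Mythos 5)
This theorem is not proved in the paper at all: it is stated as a reformulation of Theorems~1 and~2 of Kontsevich--Zorich \cite{KoZo} and used as a black box, so there is no internal proof to compare yours against. What you have written is a reasonable outline of the strategy of the original paper \cite{KoZo} itself: the identification of the hyperelliptic loci as double covers of quadratic differentials on the sphere together with a dimension count, the Arf-invariant interpretation of the spin parity to get a well-defined locally constant invariant, and the surgeries (breaking up a zero, bubbling a handle) used to connect surfaces with matching invariants. On those first two ingredients your sketch is essentially correct, modulo small imprecisions (e.g.\ a genus~$g$ double cover of the sphere is always branched over exactly $2g+2$ points by Riemann--Hurwitz; what varies between $\mathcal{H}(2g-2)$ and $\mathcal{H}(g-1,g-1)$ is whether the distinguished zero sits at a branch point or lifts to an exchanged pair).

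The genuine gap is that the third ingredient --- showing that the invariants are \emph{complete}, i.e.\ that any two nonhyperelliptic surfaces in the same stratum with the same spin parity can actually be joined by a path --- is only announced, and that is where essentially all of the difficulty of the theorem lives. Saying that breaking up a zero "has connected fibres" and that bubbling a handle has "a controlled effect on the spin parity" names the tools without supplying the argument: one must show that every connected component is adjacent to a component of the minimal stratum $\mathcal{H}(2g-2)$, that the result of breaking up a zero is independent of all choices up to homotopy, compute exactly how each surgery changes (or preserves) hyperellipticity and the Arf invariant, and carry out the low-genus base cases explicitly. In \cite{KoZo} this occupies the bulk of a long paper. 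As written, your proposal is a correct roadmap to the known proof rather than a proof, and for the purposes of the present paper the honest course is the one the authors take: cite \cite{KoZo}.
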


The following lemma is classical and will be useful in the next section.
\begin{lemma}\label{lemme:hyp}
Let $S$ be a translation surface in a hyperelliptic connected component and let $\gamma$ be a saddle connection. Then $\gamma$ and $\tau(\gamma)$ are homologous.
\end{lemma}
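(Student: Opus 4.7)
The plan is to combine two standard ingredients. First, because the quotient $S/\tau$ is the Riemann sphere, a transfer argument gives that $\tau_*$ acts as $-\mathrm{Id}$ on $H_1(S;\mathbb{Z})$. Second, since $\Sigma$ is finite, $H_1(\Sigma;\mathbb{Z})=0$, and the long exact sequence of the pair $(S,\Sigma)$ yields an injection $H_1(S;\mathbb{Z})\hookrightarrow H_1(S,\Sigma;\mathbb{Z})$. So it will suffice to produce a convenient absolute cycle built from $\gamma$ and $\tau\circ\gamma$ and show that it is trivial.

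I would introduce the $1$-chain $c=\gamma+\tau\circ\gamma$ in $C_1(S,\Sigma;\mathbb{Z})$. Using that $\tau$ fixes the unique zero in $\mathcal{H}^{\mathrm{hyp}}(2g-2)$ and swaps the two zeros in $\mathcal{H}^{\mathrm{hyp}}(g-1,g-1)$, a direct endpoint check shows that $\partial c=0$ in all cases (whether $\gamma$ is a loop at a zero, or a saddle connection between two distinct zeros swapped by $\tau$). Hence $c$ represents a class in $H_1(S;\mathbb{Z})$ via the injection above.

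The endgame is then formal. Since $\tau^2=\mathrm{id}$, we have $\tau_*[c]=[\tau\circ\gamma+\gamma]=[c]$, so $[c]$ is $\tau_*$-invariant; combined with $\tau_*=-\mathrm{Id}$ this forces $2[c]=0$, and hence $[c]=0$ since $H_1(S;\mathbb{Z})\cong\mathbb{Z}^{2g}$ is torsion-free. By the injection of the first step, $[c]$ also vanishes in $H_1(S,\Sigma;\mathbb{Z})$. With the orientation convention on $\tau(\gamma)$ that makes the holonomies of $\gamma$ and $\tau(\gamma)$ agree (which, because $\tau^*\omega=-\omega$, amounts to reversing the natural pushforward orientation), this vanishing is precisely the claimed homology $[\gamma]=[\tau(\gamma)]$.

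The only step where genuine care is needed is the endpoint check, which depends on knowing how $\tau$ permutes $\Sigma$ in each hyperelliptic component—a classical fact recalled in Section~\ref{background:cc}. Everything else is a formal consequence of $\tau_*=-\mathrm{Id}$ on absolute homology together with the injectivity of $H_1(S;\mathbb{Z})\hookrightarrow H_1(S,\Sigma;\mathbb{Z})$.
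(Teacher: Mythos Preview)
The paper does not actually prove this lemma; it is simply stated as ``classical'' and left without proof. Your argument is correct and is one of the standard routes to this fact: the transfer identity $1+\tau_*=0$ on $H_1(S;\mathbb{Z})$ (valid because it factors through $H_1(S/\tau;\mathbb{Z})=H_1(\mathbb{S}^2;\mathbb{Z})=0$), the endpoint check that $\gamma+\tau\circ\gamma$ is an absolute cycle (using that $\tau$ fixes the unique zero in $\mathcal{H}^{\mathrm{hyp}}(2g-2)$ and swaps the two zeros in $\mathcal{H}^{\mathrm{hyp}}(g-1,g-1)$), torsion-freeness of $H_1(S;\mathbb{Z})$, and the injection $H_1(S;\mathbb{Z})\hookrightarrow H_1(S,\Sigma;\mathbb{Z})$ from the long exact sequence of the pair. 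Your closing remark on orientations is also correct and worth making explicit, since $\tau^*\omega=-\omega$ means one must reverse the pushforward orientation on $\tau(\gamma)$ for the holonomies to agree and the statement ``$[\gamma]=[\tau(\gamma)]$'' to hold literally. One minor quibble: the permutation action of $\tau$ on $\Sigma$ is not spelled out in Section~\ref{background:cc} as you suggest, though the paper does invoke it in the proof of Theorem~\ref{th:cc:hyp}; it is in any case part of the standard description of hyperelliptic components.
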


\section{Hyperelliptic connected component}
In this section, we prove the first part of the Main Theorem.

\begin{theorem}\label{th:cc:hyp}
Let $\mathcal{C}$ be a hyperelliptic connected component of the moduli space of Abelian differentials. 
Let $S\in \mathcal{C}$ be a local maximum of the relative systole function $Sys$. Then $S$ is a global maximum for $Sys$ in $\mathcal{C}$.
\end{theorem}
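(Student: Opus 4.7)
I would argue by contradiction: assume $S \in \mathcal{C}$ is a local maximum of $\mathrm{Sys}$ with value $\ell$, and show that $\ell$ must equal the global maximum $\ell_{\max}$ produced in \cite{BG}. The guiding idea is that Lemma~\ref{lemme:hyp} forces the length-gradients of the systolic saddle connections to come in paired, dependent families, and this reduction of independent constraints is so severe that only the known global-max configuration can realize it.

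First I would translate local maximality into a first-order condition in period coordinates. A neighborhood of $S$ in $\mathcal{C}$ is modeled on $H^1(S,\Sigma;\mathbb{C})$; the length of a saddle connection $\gamma$ with holonomy $v_\gamma$ has, as a function of the period, the smooth linear gradient $\phi_\gamma(\eta) = \mathrm{Re}\bigl(\overline{v_\gamma/|v_\gamma|}\,\eta(\gamma)\bigr)$. If $\mathcal{S}$ denotes the set of systolic saddle connections of $S$, then local maximality on the area-one hypersurface says that no area-preserving tangent vector $\dot\omega$ satisfies $\phi_\gamma(\dot\omega) > 0$ for every $\gamma \in \mathcal{S}$. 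By a standard convex-duality argument, this means the zero functional lies in the positive convex cone spanned by $\{\phi_\gamma\}_{\gamma \in \mathcal{S}}$ together with the area gradient.

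Second, I would invoke the hyperelliptic involution $\tau$. Since $\tau^*\omega = -\omega$, the map $\tau$ is a (orientation-preserving) isometry of $S$ permuting singularities; hence $\tau(\mathcal{S}) = \mathcal{S}$. By Lemma~\ref{lemme:hyp}, every $\gamma \in \mathcal{S}$ and $\tau(\gamma) \in \mathcal{S}$ define the same homology class, so the functionals $\phi_\gamma$ and $\phi_{\tau(\gamma)}$ are linearly dependent. Consequently, the effective number of independent constraints in the cone condition is controlled not by $|\mathcal{S}|$ but by the much smaller $|\mathcal{S}/\tau|$. The cone condition must then be achievable by a very limited family of systolic saddle connections.

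Third, I would compare to the global maxima of \cite{BG}: in $\mathcal{H}^{hyp}(2g-2)$ (resp.\ $\mathcal{H}^{hyp}(g-1,g-1)$) the global maximum is the very rigid surface obtained from the regular $(4g+2)$-gon (resp.\ $(4g+4)$-gon), whose systolic saddle connections fill the surface in a combinatorially unique way compatible with $\tau$-invariance. A dimension count using the reduced constraints from Step~2 should show that the only configuration of $\mathcal{S}$ compatible with the cone condition is precisely the one realized at the global maximum, so $S$ must coincide with it (and in particular $\ell = \ell_{\max}$).

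The main obstacle will be making Step~3 quantitative: one has to exhibit, whenever $\mathcal{S}$ is ``too small'' relative to the dimension of $H^1(S,\Sigma;\mathbb{C})^{\tau\text{-quotient}}$, an explicit area-preserving deformation increasing every $|v_\gamma|$. Two extra subtleties: (i) in $\mathcal{H}^{hyp}(g-1,g-1)$ the action of $\tau$ on the two zeros may either fix them individually or exchange them, and the two cases need separate treatment because the types of saddle connections (zero-to-zero versus zero-to-self) and hence the homology dependencies differ; (ii) one must be careful about orientation conventions when applying Lemma~\ref{lemme:hyp}, since $\tau$ negates holonomy. A potentially cleaner route to avoid the case analysis is to project to the quotient $S/\tau \simeq \mathbb{CP}^1$ with its $2g+2$ marked branch points, reducing the problem to a min-distance optimization on a sphere with conical singularities, where uniqueness of critical value is geometrically more transparent.
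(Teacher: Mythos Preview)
Your approach is genuinely different from the paper's, and the gap you yourself flag in Step~3 is real and not closed by the methods you outline.

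The paper does not use first-order optimality conditions or dimension counts at all. Instead it argues constructively: normalize so that $\mathrm{Sys}(S)=1$, cut $S$ along all systolic saddle connections, and observe that if $S$ is not a global maximum then (by the characterization in \cite{BG}) at least one complementary component $C_1$ is not an equilateral triangle. Using the hyperelliptic involution together with Lemma~\ref{lemme:hyp}, one shows that $C_1$ (or a suitable $\tau$-half of it, when $\tau(C_1)=C_1$) is a topological disk bounded by $n\geq 3$ saddle connections of length $\geq 1$, not all equal to $1$ when $n=3$. A separate technical lemma (Lemma~\ref{lem:decrease:area:disk}) then produces an explicit area-decreasing deformation of any such $n$-gon keeping the length-$1$ boundary edges at length $1$; this deformation, mirrored on $\tau(C_1)$ and extended by rigid motions on the remaining pieces, gives an area-decreasing deformation of $S$ with $\mathrm{Sys}$ constant, contradicting local maximality after renormalization. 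The hyperelliptic hypothesis is used precisely to force $C_1$ to be a disk and to organize the pieces so that the local deformation extends globally.

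Your Steps~1--2 are fine: the first-order cone condition is standard, and since $\tau$ acts as $-1$ on $H_1(S,\Sigma;\mathbb{Z})$ in these components one indeed has $\phi_\gamma=\phi_{\tau(\gamma)}$, so the distinct gradients are indexed by $\mathcal{S}/\tau$. But Step~3 is where the entire content of the theorem lies, and a cardinality or dimension count cannot carry it. A surface that is not a global maximum can still have very many systolic saddle connections: every Delaunay edge could be systolic except those bounding a single non-equilateral cell. So $|\mathcal{S}/\tau|$ can be as large as you like relative to $\dim H^1(S,\Sigma;\mathbb{C})$, and the cone condition can certainly hold at the level of linear algebra. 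What fails is the \emph{geometry} of the cone, and showing that the area gradient escapes it whenever some cell is non-equilateral is essentially equivalent to exhibiting the explicit deformation the paper constructs in Lemma~\ref{lem:decrease:area:disk}; your outline supplies no mechanism for this. The alternative route you mention, via the quotient sphere $S/\tau$, is in fact close in spirit to what the paper does (the disk $C_1$ embeds in $S/\tau$), but again the substance is the polygon-deformation lemma, not the reduction.
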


The proof uses the following technical lemma. We postpone its proof to the end of the section.
\begin{lemma}\label{lem:decrease:area:disk}
Let $D$ be a translation surface that is topologically a disk and  whose boundary consists of $n$-saddle connections  (a ``$n$-gon'') with $n\geq 4$. We assume that all boundary saddle connections are of length greater than or equal to 1. Then, we can continuously deform $D$ so that its area decreases and the boundary saddle connections of length 1 remain of length 1.
\end{lemma}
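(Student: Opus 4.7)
The plan is to view the area $A$ as a smooth function on the constraint submanifold $C_I = \{|v_i|^2 = 1 : i \in I\}$ inside the local moduli space of disks with the same combinatorial type as $D$, where $I$ indexes the length-one boundary saddle connections, and to show that $A|_{C_I}$ has no positive local minimum at $D$. Working in period coordinates for the boundary holonomies $v_1,\ldots,v_n$ with $\sum_i v_i = 0$ (and additional coordinates for interior saddle connections if $D$ has any), a dimension count gives $\dim C_I \geq 2(n-1) - |I| \geq n-2 \geq 2$, leaving enough room after quotienting by the irrelevant global rotation.

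First I would compute the differential of the area in these coordinates. A direct shoelace calculation yields a formula of the form
\[
dA(\omega) = -\sum_{i=1}^n \mathrm{Im}\bigl(\bar\omega_i\, m_i\bigr),
\]
valid for every tangent variation $(\omega_i)_i$ with $\sum_i \omega_i = 0$, where $m_i = (z_{i-1}+z_i)/2$ is the midpoint of the edge $v_i$ in a chosen developing of $D$. Plugging this into the Lagrange multiplier equations for the constraints $\sum_i v_i = 0$ and $|v_i|^2 = 1$ ($i \in I$), I would check that a critical point of $A|_{C_I}$ is exactly a configuration admitting a point $O$ in the plane lying on the perpendicular bisector of every unit edge and being the midpoint of every non-unit edge. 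These two conditions together pin all the vertices of $D$ onto a common circle centred at $O$, with every non-unit boundary edge becoming a diameter: thus the critical points of $A|_{C_I}$ are precisely the cyclic configurations.

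The crux of the argument is then to argue that such cyclic critical points are local \emph{maxima}, not minima. In the case $|I| = n$ of all-unit boundary this is the classical isoperimetric statement that among polygons with prescribed side lengths, the cyclic polygon maximises the area. For $|I| < n$ the free non-unit edge lengths are themselves pinned at the critical point to values maximising the area inside the family of admissible cyclic configurations, and a Hessian computation in period coordinates should confirm that the critical points remain local maxima of $A|_{C_I}$. Granting this, the conclusion follows at once: if $D$ is not a Lagrange critical point, one deforms along the negative gradient of $A|_{C_I}$; if $D$ is a critical point, it is a local maximum, so any small variation tangent to $C_I$ strictly decreases $A$ to second order. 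In either case we produce the required continuous family $D_t$.

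The main obstacle I anticipate is the degenerate case where $D$ has several non-unit edges, so that the critical point condition forces these edges to be concurrent ``diameters'' through a common $O$; the resulting polygon may then be non-embedded in the plane and the planar isoperimetric inequality has to be replaced by an intrinsic area comparison on the translation surface, carried out directly in period coordinates via the Hessian of $A|_{C_I}$. A careful case analysis, together with the explicit formula for $dA$ given above, should handle this uniformly.
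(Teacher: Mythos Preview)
Your approach is genuinely different from the paper's and, if it could be completed, would be more conceptual. The paper does not set up a variational problem on all of $C_I$: instead it first finds two consecutive boundary angles, one $<\pi$ and the next $<2\pi$ (possible because the boundary angles sum to $(n-2)\pi$), cuts off the three boundary edges around them into a planar quadrilateral $\mathcal{Q}$ by closing with an auxiliary segment $t$, and writes $\mathrm{Area}(D)=\mathrm{Area}(D_0)+\mathrm{Area}_{alg}(\mathcal{Q})$. It then deforms only $\mathcal{Q}$, with its four side lengths held fixed. A one-parameter analysis using Bretschneider's formula, differentiated with respect to a single angle, gives a short three-case check that $\mathrm{Area}_{alg}(\mathcal{Q})$ is never at a local minimum. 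This localisation to a quadrilateral is what makes the paper's argument explicit and complete.

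Your proposal, by contrast, has a genuine gap at the decisive step. The Lagrange-multiplier characterisation of critical points (a point $O$ on the perpendicular bisector of each unit edge and at the midpoint of each non-unit edge, hence all vertices concyclic) is correct, but the assertion that every such critical point is a local \emph{maximum} of $A|_{C_I}$ is not justified. The classical isoperimetric statement only says that the \emph{convex} cyclic polygon is the global maximiser among embedded polygons with prescribed side lengths; it says nothing about non-convex or immersed cyclic configurations, which are also critical points and may well be saddles rather than maxima. Since $D$ need not develop to an embedded planar polygon (boundary angles can exceed $2\pi$, as the paper explicitly notes), you cannot restrict to the convex case. Your final paragraph acknowledges exactly this obstacle but does not resolve it: ``a Hessian computation \ldots\ should confirm'' and ``a careful case analysis \ldots\ should handle this uniformly'' are promissory notes, not arguments. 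Without that second-order analysis --- which would itself be a substantial lemma, especially in the presence of several non-unit ``diameter'' edges --- the proof is incomplete.
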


\begin{proof} [Proof of Theorem~\ref{th:cc:hyp}]
Let $S\in\mathcal{C}$ be a translation surface that such that $Sys(S)$ is not a global maximum. We use the same normalization as in \cite{BG}: after rescaling the surface we assume that $Sys(S)$ equals 1, and we will continuously deform $S$ so that $Sys(S)$ remains 1 and $\mathrm{Area}(S)$ decreases. 

Let $\gamma_1,\dots ,\gamma_r$ be the set of saddle connections realizing the systole.  
Recall that $\gamma_1,\dots ,\gamma_r$ are sides of the Delaunay triangulation and 
that global maxima correspond to surfaces which Delaunay cells are only equilateral triangles (see Lemma~3.1 and Theorem~3.3 in \cite{BG}). 
Let $C_1,\dots ,C_k$ be the connected components of $S\backslash \cup_i \gamma_i$. Up to renumbering we can assume that $C_1$ is not a triangle.  We consider $\tau(C_1)$, where $\tau$ is the hyperelliptic involution. We study the two possible cases whether $\tau(C_1)$ equals $C_1$ or not. Note that $C_1$ does not contain any singularity in its interior since there are at most two singularities in $S$ and if there are two singularities $P_1,P_2$ we must have $\tau(P_1)=P_2$.
 
{\bf Case 1.} We first assume that $\tau(C_1)\neq C_1$. Since the hyperelliptic involution preserves $\cup_i \gamma_i$, we have (up to renumbering) $\tau(C_1)=C_2$. 

We observe that $C_1$  has only one boundary component. Indeed, suppose that there are more than one such components and consider a saddle connection $\eta$ in $C_1$ that joins a singularity of one boundary component to a singularity of another boundary component. Then $\tau(\eta)$ is a curve in $C_2$ and must be homologous to $\eta$ by Lemma~\ref{lemme:hyp}. But $C_1\backslash \eta$ is connected, and hence $S\backslash (\eta\cup \tau(\eta))$ is connected, which is a contradiction. Therefore $C_1$ is a disk because it embeds in $S/\tau$ which is a sphere. 

Since the boundary of $C_1$ consists of at least 4 saddle connections of length 1,  by Lemma~\ref{lem:decrease:area:disk}, we can continuously decrease its area while keeping the boundary saddle connections of length 1.

This continuous deformation of $C_1$ leads to the following area decreasing continuous deformation of $S$: 

The component $C_2$ is deformed in a symmetric way as $C_1$.

%

For each saddle connection $\gamma$ in the boundary of $C_1$, the components of $S\backslash (\gamma\cup \tau(\gamma))$ correspond to components of the complementary of $[\gamma]$ in the quotient sphere $S/\tau$. Since $[C_1]=[C_2]$, then $C_1,C_2$ are in the same connected component of $S\backslash (\gamma\cup \tau(\gamma))$. We denote by $D_\gamma$ the other component. Note that $D_{\gamma}$ is empty if $\gamma=\tau(\gamma)$. We observe that if $\gamma_1,\gamma_2$ are two distinct saddle connections in the boundary of $C_1$, then $D_{\gamma_1}$ and $D_{\gamma_2}$ are disjoint.  In particular, we can rotate all such components $D_{\gamma}$ independently in a compatible way with the deformation of $C_1,C_2$ and we glue these components in the natural way. The area of each $C_i$ decreases while the area of the $D_\gamma$ remains constant. Therefore the total area of the surface decreases.

\medskip
{\bf Case 2.} Now we assume that $\tau(C_1)=C_1$. 

We claim that we can cut $C_1$ along saddle connections and obtain two discs $A$ and $B$ such that $\tau(A)=B$ and for each saddle connection $\gamma$ in the boundary of $A$, either $\gamma$ is of length 1, or $\tau(\gamma)=\gamma$ (equivalently, $\gamma$ is also a boundary saddle connection of $B$). 

To prove the claim, we first consider the Delaunay cells of $S$. Recall that the shortest geodesics (hence the boundary saddle connections of $C_1$) are sides of the Delaunay cells (see \cite{BG}, Lemma~3.1). This induces a decomposition of $C_1$ into Delaunay cells, and this decomposition is preserved by the involution $\tau$ because of the uniqueness of the Delaunay cell decomposition. We define a Delaunay subdivision $\mathcal{D}$ in the following way: for each Delaunay cell $d$, if $\tau(d)\neq d$, then $d,\tau(d) \in \mathcal{D}$. If $\tau(d)=d$ and since $d$ is cyclic it can be cut by a diagonal into two polygons $d'$ and $d''=\tau(d')$, then $d',d''\in \mathcal{D}$.

Now we use the following algorithm:
\begin{itemize}
\item We start from a pair $d_0,\tau(d_0)$ in $\mathcal{D}$ and let $A_0=d_0$ and $B_0=\tau(d_0)$.
\item Suppose we have constructed the disks $A_k$ and $B_k$ such that $\tau(A_k)=B_k$ and $A_k, B_k$ are union of elements in $\mathcal{D}$. 

If $A_k\cup B_k\neq C_1$, there exists an element $d_{k+1}\in \mathcal{D}$ adjacent to $A_k$ along a saddle connection $\gamma_k$ (and $\tau(d_{k+1})\in \mathcal{D}$ is adjacent to $B_k$ along $\tau(\gamma_k)$). We define $A_{k+1}$ by gluing $A_k$ and $d_{k+1}$ along $\gamma_k$. Note that $\gamma_k$ is the only saddle connection in the common boundary of $A_{k}$ and $d_{k+1}$ because otherwise $S\backslash (\gamma_k\cup \tau(\gamma_k))$ would be connected which is impossible in the hyperelliptic connected component.

If $A_k\cup B_k=C_1$, we define $A=A_k$, $B=B_k$.
\end{itemize}


The boundary of the disk $A$ consists of $n\geq 3$ saddle connections of lengths at least 1. If $n\geq 4$, then from Lemma~\ref{lem:decrease:area:disk}, it can be continuously deformed so that the area decreases and the boundary saddle connections of length 1 remain of length one. Otherwise $A$ is a triangle but cannot be a equilateral triangle, hence it can also be deformed as above.

 We deform $B$ in a symmetric way. Note that $A$ and $B$ are directly glued together in $C_1$ along the boundary saddle connections of lengths greater than one.
Therefore the possible changes of these saddle connections are not a problem. The deformation of $S\backslash C_1$ is treated  as in the previous case.




\end{proof}

\begin{proof}[Proof of Lemma~\ref{lem:decrease:area:disk}]
The sum of the boundary angles (coming from the intersection of two consecutive boundary saddle connections) of $D$ equals $(n-2)\pi$. Therefore $D$ has boundary angles smaller than $\pi$. If such a boundary angle has a corresponding boundary saddle connection which is of length greater than 1, then by slightly changing its length  we can decrease the area of the corresponding triangle and hence of $D$.

So we can assume that for each boundary angle smaller than $\pi$, the two adjacent saddle connections are of length 1.
We claim that we can find two consecutive angles such that one is smaller than $\pi$ and the other is smaller than $2\pi$ (note that since $D$ is not necessarily embedded in the plane, it can have boundary angles greater than $2\pi$). Indeed, consider the sequence of consecutive boundary angles of $D$. If each time an angle is smaller than $\pi$, the following one is greater than or equal to $2\pi$, then the global sum will be greater than $n\pi$, which is not possible.

Now we consider the 3 consecutive saddle connections corresponding to these two angles, and see them as a broken line on the plane. We close this line by adding a segment $t$ to obtain a quadrilateral $\mathcal{Q}$ (that can be also crossed). Without loss of generality, we can assume that $t$ is horizontal. We have
$$\mathrm{Area}(D)=\mathrm{Area}(D_0)+\mathrm{Area}_{alg}(\mathcal{Q})$$
where $D_0$ is the translation surface obtained by ``replacing'' the broken line by $t$ (see Figure~\ref{fig:D:et:Q}). Here $\mathrm{Area}_{alg}(\mathcal{Q})$ means that the part of $\mathcal{Q}$ below the segment $t$ is counted negatively. 

{\bf Claim:} We can deform continuously $\mathcal{Q}$ without changing the lengths of its sides so that $\mathrm{Area}_{alg}(\mathcal{Q})$ decreases.

Denote by $MNPQ$ the quadrilateral $\mathcal{Q}$ and by $a,b,c,d$ the lengths of the sides of $\mathcal{Q}$ with $a$ being the length of the segment $t=MN$. Denote by $\alpha$ the oriented angle from $MN$ to $MQ$, and by $\gamma$ its opposite angle in $\mathcal{Q}$ (\emph{i.e.} the angle from $PQ$ to $PN$). Without loss of generality we assume that $b=c=1$, $d\geq 1$ and $0<\gamma<\pi$ (in fact we must have $\gamma>\pi/3$ otherwise there would be a smallest saddle connection). We also have $-\pi<\alpha<\pi$. Also, the sides $NP$ and $QM$ do not intersect since it would imply intersecting boundary saddle connections in $D$ (see Figure~\ref{fig:D:et:Q}).

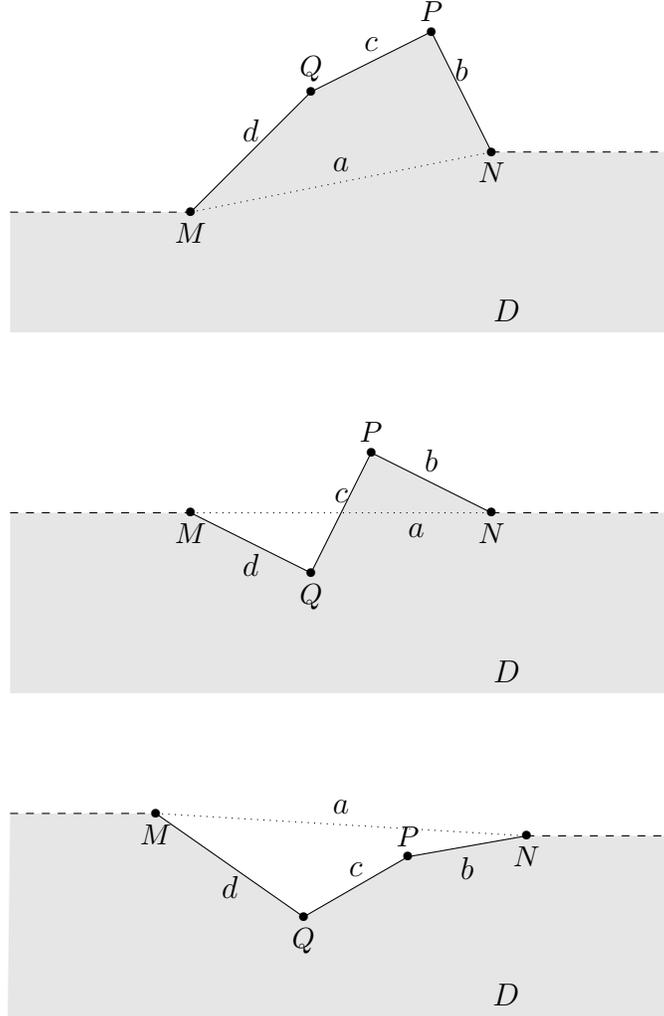
\begin{figure}[htb]
\begin{tikzpicture}[scale=0.8]
\coordinate (z1) at (2,2);
\coordinate (z2) at (2,1);
\coordinate (z3) at (1,-2) ;

\fill [fill=gray!20] (-3,0)--++(3,0) --++ (z1) --++ (z2)  --++ (z3) --++ (3,0) --++(0,-3) --++(-11,0) node[near start, above] {$D$}--cycle;

\draw (0,0)  node {\tiny $\bullet$} node[below] {\small $M$}  --++ (z1) node[midway,above] {$d$} node {\tiny $\bullet$} node[above] {\small $Q$} --++ (z2) node[midway,above] {$c$} node {\tiny $\bullet$} node[above] {\small $P$}--++ (z3) node[midway,above] {$b$} node {\tiny $\bullet$} node[below] {\small $N$};

\draw [dashed] (-3,0)--++(3,0);
\draw [dashed] ($(z1)+(z2)+(z3)$)--++(3,0);
\draw [dotted] (0,0)  --($(z1)+(z2)+(z3)$) node[midway,above] {$a$};                   

\coordinate (z1) at (2,-1);
\coordinate (z2) at (1,2);
\coordinate (z3) at (2,-1) ;

\fill [fill=gray!20] (-3,-5)--++(3,0) --++ (z1) --++ (z2)  --++ (z3) --++ (3,0) --++(0,-3) --++(-11,0) node[near start, above] {$D$}--cycle;

\draw (0,-5)  node {\tiny $\bullet$} node[below] {\small $M$}  --++ (z1) node[midway,below] {$d$} node {\tiny $\bullet$} node[below] {\small $Q$}
                     --++ (z2) node[midway,above] {$c$} node {\tiny $\bullet$} node[above] {\small $P$}
                     --++ (z3) node[midway ,above] {$b$} node {\tiny $\bullet$} node[below] {\small $N$};

\draw [dashed] (-3,-5)--++(3,0);
\draw [dashed] ($(0,-5)+(z1)+(z2)+(z3)$)--++(3,0);
\draw [dotted] (0,-5)  --++($(z1)+(z2)+(z3)$) node[near end,below] {$a$};                    

\coordinate (z1) at (-35:3);
\coordinate (z2) at (30:2);
\coordinate (z3) at (10:2) ;

\fill [fill=gray!20] (-3,-10)--++(2.42,0) --++ (z1) --++ (z2)  --++ (z3) --++ (2.42,0) --++(0,-3) --++(-11.04,0) node[near start, above] {$D$}--cycle;

\draw (-0.58,-10)  node {\tiny $\bullet$} node[below] {\small $M$}  --++ (z1) node[midway,below] {$d$} node {\tiny $\bullet$} node[below] {\small $Q$}
                     --++ (z2) node[midway,above] {$c$} node {\tiny $\bullet$} node[above] {\small $P$}
                     --++ (z3) node[midway ,below] {$b$} node {\tiny $\bullet$} node[below] {\small $N$};

\draw [dashed] (-3,-10)--++(2.42,0);
\draw [dashed] ($(-0.58,-10)+(z1)+(z2)+(z3)$)--++(2.42,0);
\draw [dotted] (-0.58,-10)  --++($(z1)+(z2)+(z3)$) node[midway,above] {$a$};    

\end{tikzpicture}
\caption{The disk $D$ and the quadrilateral $\mathcal{Q}$ in 3 configurations.}
\label{fig:D:et:Q}
\end{figure}

Denote  $K=\mathrm{Area}_{alg}(\mathcal{Q})$. We compute $K$ by adding the (algebraic) area of the triangles $MNQ$ and $NPQ$. We obtain
\begin{equation}
K=\frac{1}{2}\left(ad\sin(\alpha)+bc\sin(\gamma)\right).
\end{equation}
The expression of the length of $NQ$ gives the second equality:
\begin{equation}
a^2+d^2-2ad\cos(\alpha)=b^2+c^2-2bc\cos(\gamma).
\end{equation}
These two equations imply the Bretschneider's formula for $\mathcal{Q}$:
\begin{equation}
K^2=(s-a)(s-b)(s-c)(s-d)-abcd\cos(\frac{\alpha+\gamma}{2}),
\end{equation}
where $s=\frac{a+b+c+d}{2}$.

From now on, we fix $a,b,c,d$ and study the variations of the area with respect to $\alpha,\gamma$. Equation (2) implies that $\gamma$ depends differentially on 
$\alpha$. Hence we can write $K=K(\alpha)$. We need to prove that $K'(\alpha)\neq 0$ or $K(\alpha)$ is a strict local maximum (note that $\alpha$ varies in an open set). We have:
$$(K^2)'(\alpha)=abcd(1+\gamma'(\alpha))\sin(\frac{\alpha+\gamma}{2})\cos(\frac{\alpha+\gamma}{2}).$$
We assume that $K'(\alpha)=0$, hence $(K^2)'(\alpha)=0$, hence we are in one of the following three cases:
\begin{enumerate}
\item $\sin(\frac{\alpha+\gamma}{2})=0$. The conditions $-\pi<\alpha<\pi$ and $0<\gamma<\pi$ imply $\alpha=-\gamma<0$.  Hence the quadrilateral $\mathcal{Q}$ have self-intersections. Since the sides $NP$ and $QM$ do not intersect, the sides $MN$ and $PQ$ intersect. 
The condition $\alpha=-\gamma$ implies that the points $M,N,P,Q$ are cocyclic, and since $b=c=1$ we must have $d< 1$ which is a contradiction.
\item $\cos(\frac{\alpha+\gamma}{2})=0$. Then $\alpha+\gamma=\pi$, and therefore $\alpha>0$ and hence $K>0$. From $(2)$ and $(3)$, we have a strict local maximum for $K^2$ and therefore for $K$.
\item $\gamma'(\alpha)=-1$. By differentiating $(2)$ and using $(1)$, we see that $K=0$, hence $\mathcal{Q}$ has a self-intersection $I=MN\cap PQ$. By differentiating $(1)$ and using (2), we obtain $K'(\alpha)=0=\frac{1}{2}(\frac{a^2+d^2}{2}-1)$, hence $a^2+d^2=2$. Since $d\geq 1$, we have $a\leq 1 \leq d$. However, triangle inequalities for $INP$ and $IMQ$ give $a+c>d+b$ and hence $a>d$, which is a contradiction.
\end{enumerate}

\end{proof}

\section{Nonhyperelliptic connected components}
In this section, we prove the second part of the Main Theorem.

\begin{theorem}\label{th:cc:spin}
Each nonhyperelliptic connected component of each stratum of area one surfaces with no marked points contains local maxima of the function $\mathrm{Sys}$ that are not global.
\end{theorem}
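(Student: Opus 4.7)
The strategy is to exhibit, in each nonhyperelliptic connected component of each stratum, a translation surface that is a local maximum of $\mathrm{Sys}$ but not a global one. Recall from \cite{BG} that in every stratum of genus $g \geq 3$ one has a translation surface $S_0$ whose Delaunay decomposition has all edges of length $1$ and contains at least one cell that is not an equilateral triangle; by Theorem~3.3 and Lemma~3.1 of \cite{BG}, any such $S_0$ is automatically a local maximum of $\mathrm{Sys}$ which is not global. It remains only to place such an $S_0$ in each nonhyperelliptic component.

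By the Kontsevich--Zorich classification, a nonhyperelliptic component is either the unique nonhyperelliptic component of its stratum (when some $k_i$ is odd, or $g = 3$), or one of two components in the ``even'' case $g \geq 4$ with all $k_i$ even. In the first subcase, it suffices to verify that the surfaces $S_0$ of \cite{BG} are not hyperelliptic, which I would check from their polygonal presentation by observing that no involution of the polygon can implement a hyperelliptic involution on $S_0$. In the second subcase, one must realize \emph{both} spin parities.

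To handle the second subcase, my plan is to start from a single base example $S_0$ and apply a local surgery that flips the spin parity while preserving the local-max property. Concretely, I would look for an embedded subsurface $\Delta \subset S_0$ bounded by length-$1$ saddle connections whose combinatorial type admits two distinct refillings by translation-surface blocks of the same boundary and area, chosen so that exactly one refilling changes the parity of
\[
q(S) = \sum_{i=1}^g (\mathrm{ind}(a_i)+1)(\mathrm{ind}(b_i)+1) \bmod 2
\]
for a fixed symplectic basis. A natural candidate is a surgery on an embedded flat cylinder: shearing the cylinder by an appropriate amount flips the index modulo $2$ of a core curve while leaving all other basis data intact. Once the parity flip is achieved in one anchor stratum such as $\mathcal{H}(2g-2)$, it propagates to all other even strata by the ``breaking up a zero'' surgery of Eskin--Masur--Zorich, which is local, preserves spin parity, and, when performed at a sufficiently small scale near an existing singularity, does not affect the Delaunay decomposition outside the surgery region.

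The hard part will be the parity-switching surgery itself: the candidate move must simultaneously (i) leave the Delaunay decomposition unchanged outside a small region, (ii) keep every modified boundary saddle connection of length $\geq 1$, and (iii) demonstrably switch $q(S)$. Verifying (iii) requires a careful choice of a symplectic basis of $H_1(S;\Z)$ passing through the modified region and an explicit bookkeeping of indices before and after the surgery; this is where I expect most of the technical effort to concentrate.
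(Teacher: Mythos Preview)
Your plan has two genuine gaps, both in the second subcase.

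First, the proposed parity-switching surgery cannot work as stated. Shearing a cylinder is a continuous deformation within the same stratum, and the parity of the spin structure is locally constant: it distinguishes connected components precisely because it is invariant along any path in the stratum. Hence no amount of shearing will flip $q(S)$. Concretely, the core curve of a flat cylinder is a closed geodesic with constant tangent direction, so its index is $0$ regardless of the twist parameter; and a full Dehn twist returns the same point in moduli space. Any operation that genuinely changes the spin parity must land in a different connected component and therefore cannot be a continuous modification of $S_0$ inside its stratum.

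Second, the ``breaking up a zero'' surgery at small scale creates a new saddle connection between the two resulting zeroes whose length equals the scale of the surgery. If that scale is small, the systole of the new surface drops far below $1$ and the surface is nowhere near a local maximum of $\mathrm{Sys}$; if the scale is of order $1$, the surgery is no longer local and you lose control of the Delaunay decomposition and of the local-max criterion.

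The paper proceeds differently. For the first subcase it simply invokes Theorem~\ref{th:cc:hyp}: any local but nonglobal maximum is automatically nonhyperelliptic, so no polygonal check is needed. For the second subcase it uses a slit construction rather than a surgery on a single surface: one glues a fixed small local-but-nonglobal maximum $S_{2,0}\in\mathcal{H}(2,0)$ (or $S_{2,0,0}\in\mathcal{H}(2,0,0)$) to a \emph{global} maximum $M$ in an auxiliary stratum along matching shortest saddle connections. Theorem~4.1 of \cite{BG} then certifies that the glued surface is again a local but nonglobal maximum, and an explicit additive formula expresses $\Spin(S)$ in terms of $\Spin(M)$ and the index of the gluing curve. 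Both parities are obtained by varying the component of $M$ (or the choice of saddle connection in $M$), with a handful of low-complexity strata handled individually.
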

The proof is a refinement of the proof of Theorem~4.7 in \cite{BG}.

We will need the following lemma, which is  a refinement of Lemma~3.2 (2) in \cite{BG}.
\begin{lemma}\label{lemme:sc:indice}
Let $\mathcal{C}\subset \mathcal{H}(2k_1,\dots ,2k_r)$ be a connected component of a stratum of abelian differentials with $2k_1,\dots ,2k_r\geq 0$. There exists a surface $S\in \mathcal{C}$ realizing the  global maximum for the systole function, and such that there exists a shortest saddle connection $\gamma$ joining a singularity of degree $2k_1$ to itself and $\mathrm{Ind}([\gamma])=0$.
\end{lemma}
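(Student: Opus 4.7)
The plan is to build on Lemma~3.2(2) of \cite{BG}, which already produces a global maximum of $\mathrm{Sys}$ in $\mathcal{C}$ together with a shortest saddle connection whose class has index zero, and to refine the construction so that the chosen saddle connection is moreover a loop based at the prescribed zero $P_1$ of order $2k_1$.

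By Theorem~3.3 of \cite{BG}, after rescaling so that $\mathrm{Sys}(S)=1$, every global maximum $S\in\mathcal{C}$ has Delaunay cells that are equilateral triangles of side $1$, its shortest saddle connections are exactly the edges of this triangulation, and at each zero $P_i$ of order $2k_i$ exactly $6(2k_i+1)$ such triangles meet. Starting from a global maximum provided by Lemma~3.2(2), I would first arrange that some Delaunay edge is a loop at $P_1$: if $r=1$ there is nothing to do, and otherwise I would perform local cut-and-paste surgeries in the star of $P_1$ (for instance, swapping pairs of equilateral triangles attached at $P_1$ with pairs attached at another zero) until a length-$1$ loop at $P_1$ appears. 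Such surgeries are local and can be realized through continuous paths of equilateral-triangle tilings, so they preserve both the global-maximum property and the ambient connected component $\mathcal{C}$.

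The second step is to arrange $\mathrm{Ind}([\gamma])=0$. The index of a loop $\gamma$ at $P_1$ with constant holonomy direction $\theta$ depends only on the relative position of its incoming and outgoing sectors among the $2k_1+1$ copies of the direction $\theta$ available at $P_1$; via further local surgery near $P_1$, one can shift these sectors until they coincide in the conic cover, which is precisely the condition $\mathrm{Ind}([\gamma])=0$. The main obstacle is to verify that all these local modifications preserve the connected component when it is distinguished by the spin parity: since the cut-and-paste operations are supported in a small neighbourhood of $P_1$ and are performed through continuous deformations through global maxima, a symplectic basis of cycles drawn from the complement of that neighbourhood can be used to compute the spin parity before and after, and shows that it is unchanged. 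Checking this invariance (and in particular that the surgeries do not accidentally push the surface into a hyperelliptic component, which by Lemma~\ref{lemme:hyp} would also force unwanted identifications of the loop with its image under an involution) is the technical heart of the refinement.
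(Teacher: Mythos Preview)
Your proposal has a genuine gap: the ``local cut-and-paste surgeries'' you rely on are never made precise, and the mechanism you invoke to show they stay in the connected component $\mathcal{C}$ does not work. Surfaces realizing the global maximum of $\mathrm{Sys}$ are tiled by unit equilateral triangles, hence are determined by finite combinatorial data; in particular they form a discrete set in the stratum, so there is no ``continuous path of equilateral-triangle tilings'' joining two combinatorially distinct such surfaces. Any swap of triangles that changes the gluing pattern is a jump between distinct points of this discrete set, and a priori may change the spin parity (or land in a hyperelliptic component). Your fallback argument, computing the spin from a symplectic basis supported away from a neighbourhood of $P_1$, is also not justified: a full symplectic basis of $H_1(S,\Z)$ cannot in general avoid such a neighbourhood, and the cycles that do pass through it are exactly the ones whose indices may change under your surgery. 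The second step, ``shifting sectors until they coincide in the conic cover'', suffers from the same vagueness.

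The paper's proof bypasses all of this by constructing the desired surface directly rather than modifying an existing one. One takes a one-cylinder square-tiled surface in $\mathcal{C}$ (every connected component contains such a surface, as used already in Lemma~3.2 of \cite{BG}) arranged so that the leftmost top and bottom horizontal edges are identified with each other; this forces the left vertical edge to be a closed saddle connection based at the singularity sitting at that corner, which one may take to be the one of degree $2k_1$. Applying a single $\GL_2(\R)$ map turns the squares into equilateral triangles, yielding a global maximum, and the left side becomes the required shortest loop $\gamma$. Since $\gamma$ is homotopic to the core curve of the original horizontal cylinder, its smooth representative has constant direction and hence $\mathrm{Ind}([\gamma])=0$. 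No surgery and no spin computation are needed; the component is preserved because the $\GL_2(\R)$ action preserves components.
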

\begin{proof}
We do as in the proof of Lemma~3.2 in \cite{BG}. There exists a square tiled surface in $\mathcal{C}$ with singularities on each corner of the squares as in Figure~\ref{squares:to:triangles}, and we can assume that the top left horizontal segment identifies with the bottom left horizontal segment (see Figure~\ref{squares:to:triangles}). After a suitable transformation as in the figure, we obtain the required surface.

\begin{figure}[htb]
\begin{tikzpicture}[scale=0.8]
\coordinate (v0) at (1,0);
\coordinate (v1) at (0.5,0.866);
\coordinate (v2) at ($(v0)-(v1)$);
\coordinate (u) at (0,1);
\coordinate (v3) at ($-1*(v0)$);
\coordinate (v4) at ($-1*(v1)$);

\draw (0,0) --++ (v0) node[midway,below] {\small $1$} node{$\bullet$} --++ (v0) node{$\bullet$} --++ (v0) node{$\bullet$} --++ (v0) node{$\bullet$} --++ (v0) node{$\bullet$} --++ (v0) node{$\bullet$} --++ (v0) node{$\bullet$} --++ (v0) node{$\bullet$} --++ (u) node{$\bullet$} --++ (v3) node{$\bullet$} --++ (v3) node{$\bullet$}--++ (v3) node{$\bullet$}--++ (v3) node{$\bullet$}--++ (v3) node{$\bullet$}--++ (v3) node{$\bullet$}--++ (v3) node{$\bullet$}--++ (v3) node[midway,above] {\small $1$} node{$\bullet$} --++ (0,-1) node{$\bullet$};

\draw (0,-3) --++ (v0) node[midway,below]{\small $1$} node{$\bullet$} --++ (v0) node{$\bullet$} --++ (v0) node{$\bullet$} --++ (v0) node{$\bullet$} --++ (v0) node{$\bullet$} --++ (v0) node{$\bullet$} --++ (v0) node{$\bullet$} --++ (v0) node{$\bullet$} --++ (v1) node{$\bullet$} --++ (v3) node{$\bullet$} --++ (v3) node{$\bullet$}--++ (v3) node{$\bullet$}--++ (v3) node{$\bullet$}--++ (v3) node{$\bullet$}--++ (v3) node{$\bullet$}--++ (v3) node{$\bullet$}--++ (v3) node[midway,above] {\small $1$} node{$\bullet$} --++ (v4) node[midway, left] {$\gamma$} node{$\bullet$};

\draw ($(0,-3)+(v1)$)--++(v2)--++(v1)--++(v2)--++(v1)--++(v2)--++(v1)--++(v2)--++(v1)--++(v2)--++(v1)--++(v2)--++(v1)--++(v2)--++(v1)--++(v2);

\draw[very thick,->] (4,-0.5)--++(0,-1);
\end{tikzpicture}
\caption{A global maximum with a closed shortest saddle connection $\gamma$ sastisfying $\mathrm{Ind}([\gamma])=0$}
\label{squares:to:triangles}
\end{figure}
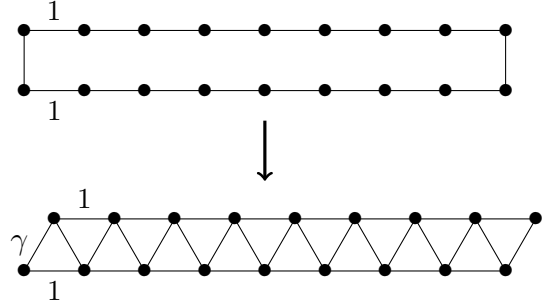

\end{proof}

\begin{proof}[Proof of Theorem \ref{th:cc:spin}]
In Theorem~4.7 in \cite{BG} we have already constructed examples in each genus $g\geq 3$ stratum. By Theorem~\ref{th:cc:hyp} each such example is in a nonhyperelliptic component. So it remains to construct new examples only in strata with more than one nonhyperelliptic connected component.

From the Theorem of Kontsevich--Zorich stated in Section~\ref{background:cc}, there is more than one nonhyperelliptic connected component only for genus $g\geq 4$ strata with only even degree singularities and in this case there are two nonhyperelliptic components distinguished by the parity of the spin structure.

In Figure~\ref{fig:examples} we give surfaces $S_{2,0}\in \mathcal{H}(2,0)$ and $S_{2,0,0}\in \mathcal{H}(2,0,0)$ that are local but nonglobal maxima for the systole function. 
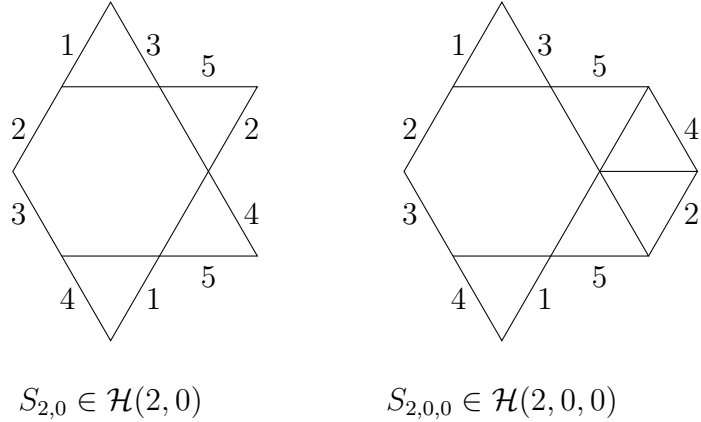
\begin{figure}[htb]
\begin{tikzpicture}[scale=1.3]
\coordinate (v0) at (1,0);
\coordinate (v1) at (0.5,0.866);
\coordinate (A) at (0,0);
\coordinate (B) at (4,0);

\draw  (A) --++ (v0) --++ (v1) --++ ($(v1)-(v0)$) --++ ($-1*(v0)$)--++ ($-1*(v1)$) node[midway,left] {$2$}--++ ($(v0)-(v1)$) node[midway,left] {$3$};
\draw (A) --++ ($(v0)-(v1)$)  node[midway,left] {$4$}  --++ (v1) node[midway,right] {$1$} --++ (v0) node[midway,below] {$5$} --++ ($(v1)-(v0)$) node[midway,right] {$4$}--++ (v1) node[midway,right] {$2$} --++ ($-1*(v0)$) node[midway,above] {$5$} --++ ($(v1)-(v0)$) node[midway,right] {$3$} --++ ($-1*(v1)$) node[midway,left] {$1$};
\draw ($(A)+(0.5,-1.5)$) node {$S_{2,0}\in \mathcal{H}(2,0)$};

\draw  (B) --++ (v0) --++ (v1) --++ ($(v1)-(v0)$) --++ ($-1*(v0)$)--++ ($-1*(v1)$) node[midway,left] {$2$}--++ ($(v0)-(v1)$) node[midway,left] {$3$};
\draw (B) --++ ($(v0)-(v1)$)  node[midway,left] {$4$}  --++ (v1) node[midway,right] {$1$} --++ (v0) node[midway,below] {$5$} --++ ($(v1)-(v0)$) --++ (v1) --++ ($-1*(v0)$) node[midway,above] {$5$} --++ ($(v1)-(v0)$) node[midway,right] {$3$} --++ ($-1*(v1)$) node[midway,left] {$1$};
\draw ($(B)+(v0)+(v1)$) --++ (v0) --++ ($(v1)-(v0)$)  node[midway,right] {$4$};
\draw ($(B)+(v0)+(v0)$) --++ (v1)  node[midway,right] {$2$};

\draw ($(B)+(0.5,-1.5)$) node {$S_{2,0,0}\in \mathcal{H}(2,0,0)$};
\end{tikzpicture}

\caption{Local but nonglobal maxima in $\mathcal{H}(2,0)$ and $\mathcal{H}(2,0,0)$}
\label{fig:examples}

\end{figure}

We consider the following construction: start from the surface $S_{2,0}$ and a surface $M$ that is a global maximum for $Sys$ in $\mathcal{H}(2k_1,\dots ,2k_r)$. There exists a shortest saddle connection $\gamma_1$ in $S_{2,0}$ joining the two singularities. By Lemma~\ref{lemme:sc:indice}, we can assume that there exists a shortest saddle connection $\gamma_2$  in $M$ joining the singularity  of degree $2k_1$ to itself and such that $\Ind([\gamma_2])=0$.  We can assume that $\gamma_1,\gamma_2$ are vertical and of the same length. Now we glue the two surfaces by the following classical surgery: cut the two surfaces along $\gamma_1$ and $\gamma_2$, and glue the left side of $\gamma_1$ with the right side of $\gamma_2$ and the right side of $\gamma_1$ with the left side of $\gamma_2$. We get a surface $S$ in $\mathcal{H}(2k_1+4,2k_2,\dots ,2k_r)$ that satisfies the hypothesis of Theorem~4.1 in \cite{BG} and hence is a local but nonglobal maximum. By Theorem~\ref{th:cc:hyp}, the surface $S$ is necessarily in a nonhyperelliptic component.

We compute $\Spin(S)$: we choose a symplectic basis $(a_i,b_i)_{i}$ of $H_1(M,\mathbb{Z})$ such that $[\gamma_2]=a_1$. Then a simple computation gives
\begin{equation}\label{eq:spin}
\Spin(S)=\Spin(S_{0,2})+\Spin(M)+\Ind(a_1)+1 \mod 2. \end{equation}
Since $\Ind(a_1)=0$, we have
$$\Spin(S)=\Spin(S_{0,2})+\Spin(M)+1 \mod 2. $$
When $\sum_{i}2 k_i\geq 4$, we can prescribe any value of $\Spin(M)$ by choosing $M$ in a suitable component and in this way we can obtain any possible value for $\Spin(S)$. Note that this is also true for $M\in \mathcal{H}(4)$ or $M\in  \mathcal{H}(2,2)$. Indeed, in these strata there are two components,  the hyperelliptic one and the nonhyperelliptic one, and  the spin structure distinguishes them (see \cite{KoZo}, Theorem~2 and Corollary~5).

By this construction, we obtain a local but non global maximum for $Sys$ in any (nonhyperelliptic) connected component of any stratum $\mathcal{H}(2n_1,\dots ,2n_r)$ for $r\geq 1$, as soon as $\sum_i 2n_i\geq 8$ and $2n_j\geq 4$ for at least one $j\in \{1,\dots ,r\}$.

We do an analogous construction as above starting from $S_{2,0,0}$ (see Figure~\ref{fig:examples}) and $M\in \mathcal{H}(0,2^r)$ with $\gamma_1\in S_{2,0,0}$ joining the two marked points, and $\gamma_2\in M$ joining the marked point to itself. We obtain a local but nonglobal maximum in $\mathcal{H}(2^{r+2})$. For $r\geq 2$ we can choose the spin structure of $M$ and thus get $S$ in any nonhyperelliptic component of $\mathcal{H}(2^{r+2})$. Note that for $r=1$, we get $S\in \mathcal{H}(2,2,2)$ with odd spin structure.

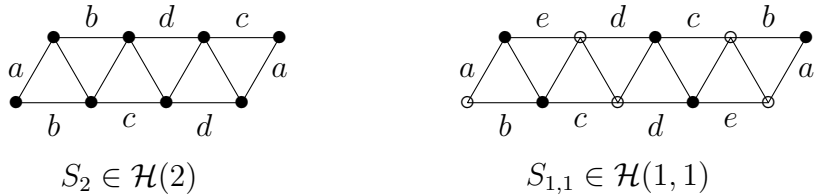
\begin{figure}[htb]
\begin{tikzpicture}[scale=1]
\coordinate (v0) at (1,0);
\coordinate (v1) at (0.5,0.866);
\coordinate (v2) at ($(v0)-(v1)$);
\coordinate (u) at (0,1);
\coordinate (v3) at ($-1*(v0)$);
\coordinate (v4) at ($-1*(v1)$);

\draw (0,0) --++ (v0) node[midway,below]{$b$} node{$\bullet$} --++ (v0) node[midway,below]{$c$} node{$\bullet$} --++ (v0) node[midway,below]{$d$} node{$\bullet$} --++ (v1) node[midway,right]{$a$} node{$\bullet$} --++ (v3) node[midway,above]{$c$} node{$\bullet$} --++ (v3) node[midway,above]{$d$} node{$\bullet$}--++ (v3) node[midway,above]{$b$} node{$\bullet$}--++ (v4) node[midway, left] {$a$} node{$\bullet$};

\draw ($(0,0)+(v1)$)--++(v2)--++(v1)--++(v2)--++(v1)--++(v2);

\draw (6,0) --++ (v0) node[midway,below]{$b$} node{$\bullet$} --++ (v0) node[midway,below]{$c$} node{$\circ$} --++ (v0) node[midway,below]{$d$} node{$\bullet$} --++ (v0) node[midway,below]{$e$} node{$\circ$} --++ (v1) node[midway,right]{$a$} node{$\bullet$} --++ (v3) node[midway,above]{$b$} node{$\circ$} --++ (v3) node[midway,above]{$c$} node{$\bullet$} --++(v3) node[midway,above]{$d$} node{$\circ$}--++ (v3) node[midway,above]{$e$} node{$\bullet$}--++ (v4) node[midway, left] {$a$} node{$\circ$};

\draw ($(6,0)+(v1)$)--++(v2)--++(v1)--++(v2)--++(v1)--++(v2) --++(v1)--++(v2);

\draw (1.5,-1) node {$S_{2}\in \mathcal{H}(2)$};
\draw (8,-1) node {$S_{1,1}\in \mathcal{H}(1,1)$};

\end{tikzpicture}
\caption{Global maxima in $\mathcal{H}(2)$ and $\mathcal{H}(1,1)$}
\label{fig:h2:h11}
\end{figure}

 There remain the following cases:
\begin{itemize}
\item $\mathcal{H}(6)$. We do the same construction as above starting from $S_{2,0}$ and $M\in \mathcal{H}(2)$. We consider for $M\in \mathcal{H}(2)$ the surface $S_2$ in Figure~\ref{fig:h2:h11}. We see that $[a],[b]$ in  this figure have different indices $\mod 2$. Hence choosing $\gamma_2=a$ or $\gamma_2=b$ gives surfaces with different Spin structure (see Equation (\ref{eq:spin})). 
\item $\mathcal{H}(4,2)$.  We do the same as for $\mathcal{H}(6)$, starting from $S_{2,0,0}$ and $M=S_2$.
\item The even component of $\mathcal{H}(2,2,2)$. We do the same construction but starting from $S_{2,0,0}$ and $M\in \mathcal{H}(1,1)$ the surface $S_{1,1}$ in Figure~\ref{fig:h2:h11}. We consider $\gamma_2=a$ (joining the two singularities of degree 1). By a direct computation, we see that the above construction gives a surface  $S\in \mathcal{H}(2,2,2)$ with $\Spin(S)=0 \mod 2$. 
\end{itemize}

\end{proof}

\end{document}